\pdfoutput=1 

\documentclass[12pt]{amsart}

\usepackage[style=alphabetic, maxnames=99]{biblatex}
\AtBeginBibliography{\small}
\usepackage{microtype}
\microtypesetup{protrusion=false}
\usepackage{xpatch}
\usepackage[english]{babel}
\usepackage{amsfonts}
\usepackage{amsmath}
\usepackage{amssymb}
\usepackage{mathrsfs}
\usepackage{stmaryrd}

\usepackage{amsthm}
\xpatchcmd{\proof}{\itshape}{\bfseries}{}{}
\usepackage{thmtools}
\usepackage{thm-restate}
\usepackage{tikz}
\usepackage{tikz-cd}
\usetikzlibrary{arrows.meta,automata,positioning}
\usetikzlibrary{svg.path}
\usepackage{amscd}
\usepackage{pdfpages}
\usepackage{float}
\setcounter{MaxMatrixCols}{15}
\usepackage{enumitem}
\usepackage[labelfont=bf]{caption}
\usepackage[linewidth=0pt]{mdframed}
\usepackage{subfig}
\usepackage{makecell}
\usepackage{multicol}
\usepackage{listings}
\usepackage{fancyhdr}
\usepackage[bottom, perpage]{footmisc}
\usepackage{etoolbox}

\patchcmd{\section}{\scshape}{\bfseries}{}{}
\makeatletter
\renewcommand{\@secnumfont}{\bfseries}
\makeatother

\allowdisplaybreaks

\usepackage{xr}
\usepackage{subfiles}

\usepackage{setspace}
\setlength{\parskip}{0.25em}
\usepackage{xstring}
\usepackage{hyperref}
\usepackage[capitalize]{cleveref}
\usepackage{url}
\hypersetup{
	colorlinks   = true, 
	urlcolor     = cyan, 
	linkcolor    = blue, 
	citecolor   = blue,
	breaklinks = true
}

\numberwithin{equation}{section}
\numberwithin{figure}{section}
\makeatletter
\let\c@equation\c@figure
\makeatother

\makeatletter
\newcommand{\labeltext}[3][]{%
	\@bsphack%
	\csname phantomsection\endcsname
	\def\tst{#1}%
	\def\labelmarkup{}
	\def\refmarkup{}%
	\ifx\tst\empty\def\@currentlabel{\refmarkup{#2}}{\label{#3}}%
	\else\def\@currentlabel{\refmarkup{#1}}{\label{#3}}\fi%
	\@esphack%
	\labelmarkup{#2}
}
\makeatother

\declaretheorem[numberwithin=section,numberlike=equation,style=plain]{theorem}
\declaretheorem[numbered=no,style=plain,name=Theorem]{theorem*}
\declaretheorem[numberwithin=section,numberlike=equation,style=plain]{proposition}
\declaretheorem[numbered=no,style=plain,name=Proposition]{proposition*}
\declaretheorem[numberwithin=section,numberlike=equation,style=plain]{lemma}
\declaretheorem[numbered=no,style=plain,name=Lemma]{lemma*}
\declaretheorem[numberwithin=section,numberlike=equation,style=plain]{corollary}
\declaretheorem[numbered=no,style=plain,name=Corollary]{corollary*}
\declaretheorem[numberwithin=section,numberlike=equation,style=plain]{conjecture}
\declaretheorem[numbered=no,style=plain,name=Conjecture]{conjecture*}

\declaretheorem[numbered=no,style=plain,name=Question]{question*}

\declaretheorem[numberwithin=section,numberlike=equation,style=definition]{definition}
\declaretheorem[numbered=no,style=definition,name=Definition]{definition*}
\declaretheorem[numberwithin=section,numberlike=equation,style=definition]{remark}
\declaretheorem[numbered=no,style=definition,name=Remark]{remark*}

\declaretheorem[numbered=no,style=definition,name=Notation]{notation*}

\declaretheorem[numbered=no,style=definition,name=Axiom]{axiom*}

\declaretheorem[numbered=no,style=definition,name=Construction]{construction*}

\declaretheorem[numbered=no,style=definition,name=Algorithm]{algorithm*}

\declaretheorem[numbered=no,style=definition,name=Summary]{summary*}

\declaretheorem[numbered=no,style=definition,name=Property]{property*}

\declaretheorem[numbered=no,style=definition,name=Note]{note*}


\declaretheorem[numbered=no,style=definition,name=Example,qed=$\diamondsuit$]{example*}
\usepackage{fullpage}
\usepackage[all]{xy}
\usepackage{aliascnt}
\usepackage{combelow}

\newcommand{\bR}{\mathbb{R}}

\newcommand{\bZ}{\mathbb{Z}}
\newcommand{\R}{\mathbb{R}}

\newcommand{\Z}{\mathbb{Z}}
\newcommand{\F}{\mathbb{F}}

\DeclareMathOperator{\ent}{ent}

\title{There are no good infinite families of toric codes}

\author{Jason P.~Bell, Sean Monahan, Matthew Satriano, Karen Situ, and Zheng Xie}

\address{Jason P.~Bell, Department of Pure Mathematics, University of Waterloo}
\email{jpbell@uwaterloo.ca}

\address{Sean Monahan, Department of Pure Mathematics, University of Waterloo}
\email{sean.monahan@uwaterloo.ca}

\address{Matthew Satriano, Department of Pure Mathematics, University of Waterloo}
\email{msatriano@uwaterloo.ca}

\address{Karen Situ, Department of Pure Mathematics, University of Waterloo}
\email{k4situ@uwaterloo.ca}

\address{Zheng Xie, Department of Pure Mathematics, University of Waterloo}
\email{z6xie@uwaterloo.ca}

\keywords{Toric codes, Szemer\'edi's Theorem, Lovasz local lemma}
\subjclass{14G50,14M25,11B30, 94B05}

\addbibresource{references.bib}

\begin{document}

\begin{abstract}
	Soprunov and Soprunova posed a question on the existence of infinite families of toric codes that are ``good" in a precise sense.   
	We prove that such good families do not exist by proving a more general Szemer\'edi-type result:~for all $c\in(0,1]$ and all positive integers $N$, subsets of density at least $c$ in $\{0,1,\dots,N-1\}^n$ contain hypercubes of arbitrarily large dimension as $n$ grows.
\end{abstract}

\maketitle

\section{Introduction}

After the pioneering work of Hamming in the 1940s and 50s \cite{hamming1950error}, error-correcting codes have been of central importance for our modern technologies, and have had wide connections to theoretical areas of mathematics such as number theory and algebraic geometry. Following the foundational work of Goppa \cite{goppa1981codes-curves,goppa1982algebraic-codes} on algebraic-geometric codes -- extending the famous Reed-Solomon code \cite{reed1960polynomial-codes} -- Tsfasman and Vl\u{a}du\c{t} \cite{tsfasman1991algebraic-codeds} established a framework for constructing error-correcting codes from algebraic varieties. Applying this framework to the class of toric varieties has seen great success due to the combinatorial nature of these objects which is well-suited for performing explicit computations.  

Toric codes were first introduced by Hansen \cite{hansen2000toric-surfaces,hansen2002toric-hirzebruch} and have subsequently been studied by a number of authors, including Joyner, Little, Schenck, Schwarz, Ruano, Soprunov, and Soprunova \cite{joyner2004toric-codes,little-schenck2006toric-codes,little-schwarz2007toric-codes,ruano2007toric-codes,soprunov-soprunova2009toric-codes-minkowski,soprunov-soprunova2010bringing-toric-codes}. 
These codes are built from the following data. Fix a prime power $q$, which determines a finite field $\F_q$, and let $P$ be an integral convex polytope which is contained in the hypercube $[0,q-2]^n$ in $\R^n$. One can construct the \textit{toric code} $C_P:=C_P(\F_q)$ associated to $P$ as the image of an explicit injective linear map $\F_q\{P\cap \Z^n\}\to \F_q^{(q-1)^n}$; see, for example, \cite[Section 2]{dolorfino2022good-families-toric} for more details. For simplicity, we often refer to just the polytope $P$ rather than the code $C_P$. 

There are three key quantities associated to an error-correcting code, which have pleasant descriptions in the case of the toric code $C_P$: the \textit{block length}, which is simply $(q-1)^n$; the \textit{dimension}, which is the number of lattice points in $P$, i.e. $|P\cap\Z^n|$; and the \textit{minimum distance}, which is the minimum Hamming distance over all nonzero vectors in $C_P$ (see \cite[Definition 2.4]{dolorfino2022good-families-toric}). 

To determine if a toric code is ``good'' from a coding-theoretic perspective, one considers the relationships between these three quantities. In particular, one wants both the dimension and the minimum distance to be large relative to the block length. That is, one wants the \textit{relative minimum distance} $$d(P):=\frac{\text{minimum distance of } C_P}{(q-1)^n}$$ and the \textit{information rate} $$R(P):=\frac{|P\cap\Z^n|}{(q-1)^n}$$ to be large.\footnote{Note that we use $d(P)$ for the relative minimum distance, even though some sources such as \cite{dolorfino2022good-families-toric} use $d(C_P)$ to denote the minimum distance and $\delta(C_P)$ for the relative minimum distance. We prefer to reserve $\delta$ for the density of a subset.} The difficulty in finding good toric codes is that $d(P)$ and $R(P)$ are inversely related:~as one becomes larger, the other becomes small. 

In \cite{soprunov-soprunova2010bringing-toric-codes}, Soprunov and Soprunova consider infinite families of toric codes with a fixed value of $q$. Since they do not write down a formal definition, we refer to \cite{dolorfino2022good-families-toric}. Recall that $q$ is a fixed prime power. 

\begin{definition}[{\cite[Definition 3.1]{dolorfino2022good-families-toric}}]\label{def:infinite family toric codes}
	An \textit{infinite family of toric codes} is a sequence $\{P_i\}_i$ of nonempty integral convex polytopes satisfying $P_i\subseteq [0,q-2]^{n_i}\subseteq \R^{n_i}$ such that $n_i\to\infty$ as $i\to\infty$. 
\end{definition}

Note that Soprunov and Soprunova mainly consider the case where $n_i=i$, so \cref{def:infinite family toric codes} is more general. They are interested in infinite families of toric codes which are good in the sense mentioned above. 

\begin{definition}[{\cite[Section 4]{soprunov-soprunova2010bringing-toric-codes}, cf.~\cite[Definition 3.2]{dolorfino2022good-families-toric}}]\label{def:good family of toric codes}
	An infinite family of toric codes $\{P_i\}_i$ is called \textit{good} if both $d(P_i)$ and $R(P_i)$ approach positive constants as $i\to\infty$. 
\end{definition}

Soprunov and Soprunova showed that certain explicit constructions of toric codes did not produce good families, and they say, ``It would be interesting to find an infinite good family of toric codes". This motivated the work of Dolorfino et al. \cite{dolorfino2022good-families-toric} in which they conjecture that there are \textit{no} good infinite families of toric codes. To support this conjecture, they introduce the following statistic for polytopes. 

First, let us clarify that an \textit{integer affine transformation} $F:\R^m\to \R^n$ is of the form $F(x)=Ax+b$ where $A$ and $b$ have integer entries; and we say that $F$ is a \textit{unimodular affine transformation} if the columns of $A$ form part of a $\Z$-basis for $\Z^n$ (note that $A$ need not be square). 

\begin{definition}[{\cite[Definition 4.1]{dolorfino2022good-families-toric}}]\label{def:M(P)}
	Given an integral convex polytope $P$, let 
	\begin{align*}
		M(P):= \max\{~m~ \mid \exists\text{ a unimodular affine transformation $F$ such that } F([0,1]^m)\subseteq P\}.
	\end{align*}
	If $P=\varnothing$, then $M(P):=-\infty$. 
\end{definition}

That is, $M(P)$ measures the dimension of the largest unit hypercube contained in $P$. The expectation is that, if $M(P)$ is large, then the number of lattice points in $P$ should be large. In \cite[Proposition 4.2]{dolorfino2022good-families-toric} they show that, for any infinite family of toric codes $\{P_i\}_i$, if the sequence $\{M(P_i)\}_i$ is unbounded, then $\{d(P_i)\}_i$ cannot converge to a positive constant. In particular, this implies that if $\{M(P_i)\}_i$ is unbounded, then $\{P_i\}_i$ is \textit{not} a good family. Thus, to determine if there are good infinite families of toric codes, one may restrict to the case where $\{M(P_i)\}_i$ is bounded. In this case, Dolorfino et al. make the following conjecture, which would imply that $\{P_i\}_i$ is \textit{not} a good family in the case where $\{M(P_i)\}_i$ is bounded. 

\begin{conjecture}[{\cite[Conjecture 4.3]{dolorfino2022good-families-toric}}]\label{conj:main}
	If $\{P_i\}_i$ is an infinite family of toric codes such that $\{M(P_i)\}_i$ is bounded, then $R(P_i)\to 0$ as $i\to\infty$. 
\end{conjecture}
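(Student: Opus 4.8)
The plan is to derive the conjecture from the Szemer\'edi-type statement announced in the abstract, which I record here as the \emph{Hypercube Lemma}: for every $c\in(0,1]$, every integer $N\ge 2$, and every $d\ge 1$, there is an $n_0=n_0(c,N,d)$ such that for all $n\ge n_0$, every $S\subseteq\{0,1,\dots,N-1\}^n$ with $|S|\ge cN^n$ contains a \emph{combinatorial $d$-hypercube} --- a set $\{a+\sum_{j\in T}\chi_{I_j}:T\subseteq\{1,\dots,d\}\}$, where $I_1,\dots,I_d\subseteq\{1,\dots,n\}$ are pairwise disjoint and nonempty, $a\in\{0,\dots,N-1\}^n$, $a_i\le N-2$ for all $i\in\bigcup_j I_j$, and $\chi_I:=\sum_{i\in I}e_i$. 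Granting this, the conjecture follows by contradiction. Write $N:=q-1$ (we may assume $q\ge 3$). If $R(P_i)\not\to 0$, then $R(P_i)\ge c$ for some fixed $c>0$ and infinitely many $i$; for each such $i$ the set $S_i:=P_i\cap\Z^{n_i}$ is a subset of $\{0,\dots,N-1\}^{n_i}$ of density $\ge c$, and $n_i\to\infty$, so the Hypercube Lemma puts a combinatorial $d$-hypercube inside $S_i$ for every fixed $d$ and all large such $i$. Since the vectors $\chi_{I_1},\dots,\chi_{I_d}$ have pairwise disjoint supports, they form part of a $\Z$-basis of $\Z^{n_i}$, so $t\mapsto a+\sum_j t_j\chi_{I_j}$ is a unimodular affine transformation; as $P_i$ is convex and contains the $2^d$ vertices of the hypercube, it contains the image of $[0,1]^d$ under this map, so $M(P_i)\ge d$. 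Hence $\{M(P_i)\}_i$ is unbounded, contrary to hypothesis.

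It remains to prove the Hypercube Lemma, which I would do by induction on $d$. The base case $d=1$ asserts that a subset $S\subseteq\{0,\dots,N-1\}^n$ of density $\ge c$ with $n$ large contains a combinatorial $1$-hypercube, i.e.\ a pair $x<y$ with $y-x\in\{0,1\}^n$. To prove it, partition each coordinate axis $\{0,\dots,N-1\}$ into intervals of length at most $2$ (all of length $2$ if $N$ is even, one of length $1$ otherwise), and partition $\{0,\dots,N-1\}^n$ into the product cells $C=C_1\times\cdots\times C_n$ so obtained. If $S$ contains no combinatorial $1$-hypercube, then within each cell every comparable pair of points of $S$ would be one, so $S\cap C$ is an antichain in a Boolean lattice of rank $k(C):=\#\{i:|C_i|=2\}$, and Sperner's theorem gives $|S\cap C|\le\binom{k(C)}{\lfloor k(C)/2\rfloor}$. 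Summing over the cells, and using that all but an exponentially small proportion of $\{0,\dots,N-1\}^n$ lies in cells with $k(C)=\Omega(n)$ --- where the Sperner bound beats the trivial bound $2^{k(C)}$ by a factor $O(1/\sqrt n)$ --- one gets $|S|=o(N^n)$, contradicting $|S|\ge cN^n$ for $n$ large.

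For the inductive step, fix $d\ge 2$, set $t:=n_0(c/2,N,d-1)$, and split the $n$ coordinates into a block $B$ of the last $t$ of them and its complement of size $n-t$. An averaging argument shows that a $c/2$ fraction of the points $x\in\{0,\dots,N-1\}^{n-t}$ have fiber $S_x:=\{y\in\{0,\dots,N-1\}^t:(x,y)\in S\}$ of density $\ge c/2$ in $\{0,\dots,N-1\}^t$, so by the inductive hypothesis each such $S_x$ contains a combinatorial $(d-1)$-hypercube. Such a hypercube in the fixed cube $\{0,\dots,N-1\}^t$ can take at most $(Nd)^t$ distinct ``shapes'', so by pigeonhole some fixed shape $\mathcal C^*$, with base point $a^*$ and moving sets inside $B$, is realized for a set $X_0$ of such $x$ of density at least $c':=(c/2)(Nd)^{-t}>0$, a constant independent of $n$. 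Applying the base case to $X_0$ --- legitimate once $n-t\ge n_0(c',N,1)$ --- yields $x_0<x_1$ in $X_0$ with $x_1-x_0=\chi_{I_0}$; then $I_0$ together with the moving sets of $\mathcal C^*$, based at $(x_0,a^*)$, forms a combinatorial $d$-hypercube inside $S$: each of its vertices has its first $n-t$ coordinates equal to $x_0$ or $x_1$ and its $B$-coordinates equal to a vertex of $\mathcal C^*$, and since $x_0,x_1\in X_0$ carry the whole vertex set of $\mathcal C^*$ in their fibers, all such points lie in $S$. This closes the induction, with $n_0(c,N,d)\le n_0(c',N,1)+n_0(c/2,N,d-1)$.

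The main obstacle is the base case when $N\ge 3$. One cannot just restrict $S$ to an embedded Boolean subcube $v+\{0,1\}^n$ and quote Sperner, since there are only $(N-1)^n$ such subcubes and each occupies a fraction $(2/N)^n$ of the ambient set, so no averaging keeps the relative density bounded away from $0$; the tiling by short intervals is precisely what circumvents this. This is also the place at which one may instead invoke Szemer\'edi's theorem, using the Lov\'asz local lemma to control the random choice of moving sets. A last point, routine but essential, is the bookkeeping in the inductive step: one must check that the surviving density $c'$, while it decays rapidly in $d$, depends only on $c$, $N$, $d$ and not on $n$, so the base case can honestly be re-applied at scale $n-t$.
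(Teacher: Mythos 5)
Your proposal is correct, but it reaches the conjecture by a genuinely different mechanism than the paper. The paper likewise reduces to a density statement about subsets of $[N]^n$ and lifts dimension by splitting off a block of coordinates, but its key lemma is different: among the many prefixes $a$ whose fibers $T_a$ are dense, an inclusion--exclusion count (\cref{l:density-of-intersections}) produces two prefixes $a\neq b$ with $T_a\cap T_b$ still dense, and the new cube direction is $b-a$ (\cref{l:MP-vs-MSa}); the resulting recursion (\cref{prop:inductive-f-bound}) degrades the density only polynomially at an additive cost of $\lceil\log_N(8c^{-2})\rceil$ in $n$ per dimension gained, which compounds to the sharp rate $f_N(n,c)\sim\log_2 n$ of \cref{thm:main}. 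You instead pigeonhole on the \emph{shape} of a $(d-1)$-cube found in each dense fiber and then use a Sperner-theorem base case (via the tiling by length-$\le 2$ intervals, which correctly handles $N\ge 3$ and odd $N$ via the Chernoff-plus-Sperner count) to find two base points of the same shape differing by a nonzero $0/1$ vector. This buys two things: the argument is elementary and self-contained (Sperner, averaging, pigeonhole; note the Lov\'asz Local Lemma, which you mention in passing, is used in the paper only for the upper-bound/optimality statements and is irrelevant to the conjecture), and your cubes come with pairwise-disjoint-support $0/1$ directions, so unimodularity of the map $[0,1]^d\to P_i$ is immediate and matches \cref{def:M(P)} more transparently than the direction $b-a$ of \cref{l:MP-vs-MSa}, which need not be primitive. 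What it costs is quantitative strength: your recursion $n_0(c,N,d)\le n_0(c',N,1)+n_0(c/2,N,d-1)$ with $c'=(c/2)(Nd)^{-t}$ and $t=n_0(c/2,N,d-1)$ makes $n_0$ grow tower-like in $d$, so you prove that $M$ is unbounded on dense families --- exactly what \cref{conj:main} needs --- but not the $\log_2 n$ growth of \cref{thm:main}. Your deduction of the conjecture from the Hypercube Lemma, and the bookkeeping ensuring $c'$ is independent of $n$, both check out; like the paper, you implicitly set aside the degenerate case $q=2$ (i.e.\ you assume $N\ge 2$).
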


Our first main theorem addresses this conjecture.

\begin{theorem}\label{thm:no good families of toric codes}
	\cref{conj:main} is true. Therefore, there are no good infinite families of toric codes (in the sense of \cref{def:good family of toric codes}).
\end{theorem}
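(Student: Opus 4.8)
The plan is to derive both assertions from a single combinatorial result about dense subsets of a box, in the spirit of Szemer\'edi's theorem. Write $N:=q-1$ (the case $q=2$, where the codes have block length $1$, being degenerate), so that the lattice points of $[0,q-2]^n$ are exactly the elements of $\{0,1,\dots,N-1\}^n$; for an infinite family of toric codes $\{P_i\}_i$ set $S_i:=P_i\cap\Z^{n_i}\subseteq\{0,1,\dots,N-1\}^{n_i}$, so that $R(P_i)$ equals the density of $S_i$. Call $H\subseteq\{0,1,\dots,N-1\}^n$ a \textbf{combinatorial hypercube of dimension $m$} if $H=\{\,w+\sum_{j\in T}\mathbf{1}_{B_j}\,:\,T\subseteq\{1,\dots,m\}\,\}$ for some pairwise disjoint nonempty $B_1,\dots,B_m\subseteq\{1,\dots,n\}$ and some $w\in\{0,1,\dots,N-1\}^n$ with $w_k\le N-2$ for all $k\in B_1\cup\dots\cup B_m$, where $\mathbf{1}_B$ is the indicator vector of $B$. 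The key elementary point is that \emph{if $S_i$ contains such an $H$, then $M(P_i)\ge m$}: the vectors $\mathbf{1}_{B_1},\dots,\mathbf{1}_{B_m}$ have pairwise disjoint supports, hence extend to a $\Z$-basis of $\Z^{n_i}$, so the affine map $t\mapsto w+\sum_j t_j\mathbf{1}_{B_j}$ is the restriction of a unimodular affine transformation $A$, and $A([0,1]^m)=\Conv(H)\subseteq\Conv(S_i)\subseteq P_i$; now invoke \cref{def:M(P)}.

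With this reduction, \cref{conj:main} --- and hence \cref{thm:no good families of toric codes} --- follows from the following \emph{combinatorial core}, which is the Szemer\'edi-type statement advertised in the abstract: for every $c\in(0,1]$, every integer $N\ge 2$, and every $m\in\Z_{\ge 0}$, there is $n_0=n_0(c,N,m)$ such that every $S\subseteq\{0,1,\dots,N-1\}^n$ with $n\ge n_0$ and $|S|\ge cN^n$ contains a combinatorial hypercube of dimension $m$. Indeed, if $\{M(P_i)\}_i$ were bounded while $R(P_i)\not\to 0$, we could pass to a subsequence with $R(P_{i_k})\ge c$ for a fixed $c>0$; since $n_{i_k}\to\infty$, the core would produce combinatorial hypercubes of every fixed dimension $m$ inside $S_{i_k}$ for all large $k$, forcing $M(P_{i_k})\ge m$ --- a contradiction --- so \cref{conj:main} holds. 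The final sentence of \cref{thm:no good families of toric codes} then follows by combining \cref{conj:main} with \cite[Proposition 4.2]{dolorfino2022good-families-toric}: a good family has $\liminf d(P_i)>0$, so $\{M(P_i)\}_i$ is bounded (otherwise $d(P_i)\to 0$), hence $R(P_i)\to 0$ by \cref{conj:main}, contradicting $\liminf R(P_i)>0$.

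To prove the combinatorial core I would induct on $m$, the case $m=0$ being vacuous. The inductive step rests on a \textbf{good block lemma}: if $S\subseteq\{0,1,\dots,N-1\}^n$ has density at least $c$ and $n$ is large in terms of $c$ and $N$, then there exist a nonempty block $B\subseteq\{1,\dots,n\}$ of bounded size $|B|\le L(c,N)$ and a level $a\in\{0,\dots,N-2\}$ such that the doubly restricted set $S':=\{\,y\in\{0,1,\dots,N-1\}^{\{1,\dots,n\}\setminus B}:(y,a\mathbf{1}_B)\in S\text{ and }(y,(a+1)\mathbf{1}_B)\in S\,\}$ has density at least $c'=c'(c,N)>0$. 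Granting this, one applies the inductive hypothesis to $S'$ on the remaining $n-|B|$ coordinates to get pairwise disjoint blocks $B_1,\dots,B_{m-1}$ and a base point $w'$ there; the combinatorial hypercube with blocks $B_1,\dots,B_{m-1},B$ and base point equal to $w'$ off $B$ and to $a$ on $B$ then lies in $S$. Since the densities $c\mapsto c'\mapsto c''\mapsto\cdots$ stay positive over the $m$ rounds and only boundedly many coordinates are discarded per round, a suitable $n_0(c,N,m)$ (roughly linear in $m$) exists.

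The one genuinely difficult step is the good block lemma, and it is here that I expect Szemer\'edi's theorem and the Lov\'asz local lemma to be used. A direct probabilistic attempt --- choose $B$ and a random completion and ask the local lemma to force all $2^m$ vertices of a candidate hypercube into $S$ --- cannot work, because for $c<1$ the bad events ``a given vertex misses $S$'' have probability bounded away from $0$, so the local lemma does not even get started. Instead one needs to combine: (i) a transference/averaging argument that locates a good block together with a compatible pair of consecutive levels $a,a+1$, which must overcome arithmetic obstructions (for instance $S$ could be a union of residue classes of $x\mapsto\sum_k x_k$, which forces $|B|$ to be divisible by the modulus) --- this arithmetic layer being what Szemer\'edi's theorem controls; with (ii) the Lov\'asz local lemma, to handle the correlations that arise once $B$ is selected from a sufficiently rich family of candidate blocks, guaranteeing that $S'$ is dense for at least one admissible choice. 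Keeping the block sizes bounded (so that the discarded coordinates never exhaust $\{1,\dots,n\}$) and propagating the density constants through the induction I expect to be routine once the good block lemma is in hand; that lemma is the crux.
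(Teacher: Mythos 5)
Your reduction is fine: passing to $S_i=P_i\cap\Z^{n_i}$, noting that indicator vectors of pairwise disjoint blocks extend to a $\Z$-basis so that a ``combinatorial hypercube'' inside $S_i$ forces $M(P_i)\ge m$, and then deducing \cref{conj:main} (and the nonexistence of good families via \cite[Proposition 4.2]{dolorfino2022good-families-toric}) from your combinatorial core is all correct, and it mirrors the reduction the paper makes in \cref{rmk:no good families from main thm}. But the proposal has a genuine gap exactly where you say the crux is: the ``good block lemma'' (equivalently your combinatorial core) is never proved, and the tools you expect to prove it with are not the ones that work. As you yourself observe, the Lov\'asz local lemma cannot force containment events whose probabilities are bounded away from $1$; in the paper the local lemma is used only in the opposite direction, to \emph{construct} dense sets with no large hypercube (\cref{prop:main-upper-bound} and the optimality part of \cref{thm:c=0}). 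Szemer\'edi's theorem is likewise never invoked; it appears only as an analogy. So the heart of the theorem is left as an unproven lemma together with a sketch whose main ingredients would not deliver it.

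For comparison, the paper's lower bound (\cref{prop:main-lower-bound}) is entirely elementary and avoids your structural constraints. Fix a prefix length $r$: since $S$ is dense, many prefixes $a\in[N]^r$ have dense fibers $T_a$, and a one-line inclusion--exclusion (\cref{l:density-of-intersections}) produces two distinct prefixes $a\neq b$ with $\delta(T_a\cap T_b)$ bounded below in terms of $c$ alone. The single difference vector $b-a$ (which need not be an indicator vector, and no ``level'' bookkeeping is needed) supplies one new cube direction via \cref{l:MP-vs-MSa}, giving the recursion $f(n,c)\ge f(n-\lceil\log(8c^{-2})\rceil,\,2c^2(c+4)^{-2})+1$ (\cref{prop:inductive-f-bound}), which iterates roughly $\log n$ times. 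Your formulation, which insists on the direction $\mathbf{1}_B$ and on the two consecutive constant levels $a,a+1$ across the whole block, demands strictly more structure than is needed for $M(P_i)\ge m$ and is precisely what creates the arithmetic obstructions you worry about; the paper sidesteps them by letting the pigeonhole argument choose an arbitrary pair of prefixes. To repair your argument, either prove your good block lemma (which I expect is at least as hard as the paper's recursion) or weaken the core to parallelepipeds with arbitrary difference vectors supported in disjoint coordinate windows and run the fiber-intersection argument above.
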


In fact, \cref{thm:no good families of toric codes} follows from a more general theorem regarding subsets of large density, which we now describe. Given an inclusion of finite sets $X\subseteq Y$, we let $\delta_Y(X):=\frac{|X|}{|Y|}$ be the \emph{density} of $X$ in $Y$; frequently $Y$ is understood from context, in which case $\delta_Y(X)$ is written simply as $\delta(X)$. 

There is a general principle, central both in additive combinatorics and ergodic theory, that sufficiently dense sets should have structure. A well-known instance of this philosophy is Szemer\'edi's Theorem \cite{szemeredi1975integers} which shows that dense subsets in $\bZ$ have arithmetic progressions. In the context of toric codes, the information rate $R(P)$ is precisely the density of $P\cap\Z^n$ in $\{0,1,\ldots,q-2\}^n$.

To state our main result, we require the following adaptation of \cref{def:M(P)} to finite sets of lattice points. Note that the following uses injective integer affine transformations, which is more general than unimodular affine transformations as in \cref{def:M(P)}. 

\begin{definition}\label{def:M_0(S)}
	Given a finite set $S\subseteq \Z^n$, let
	\begin{align*}
		M_0(S) := \max\left\{ ~m~ ~\middle|~~ \begin{array}{c} \exists \text{ an injective integer affine transformation} \\ F \text{ such that } F(\{0,1\}^m)\subseteq S \end{array} \right\}.
	\end{align*}
	If $S=\varnothing$, then $M_0(S):=-\infty$. 
\end{definition}

\begin{remark}\label{rmk:compare M and M_0}
	Given an integral convex polytope $P\subseteq \R^n$, we have $M(P)\leq M_0(P\cap \Z^n)$. 
\end{remark}

In general, $M(P)$ does not necessarily equal $M_0(P\cap \Z^n)$. However, the following proposition gives an important relationship between these two statistics which is key in proving \cref{thm:no good families of toric codes}. 

\begin{proposition}\label{prop:bounded M and M_0}
	Let $\{P_i\}_i$ be an infinite family of toric codes. Then $\{M(P_i)\}_i$ is bounded if and only if $\{M_0(P_i\cap \Z^{n_i})\}_i$ is bounded.
\end{proposition}

The following theorem is our main result. Notice that \cref{thm:main} holds for arbitrary finite sets, not just sets of polytope lattice points, and we are able to replace the prime power $q$ with any integer $N\geq 2$. 

\begin{theorem}\label{thm:main}
	Fix $N\geq2$ and $c\in(0,1)$, and let
	\[
	f_N(n,c):=\inf\{M_0(S)\mid S\subseteq \{0,1,\dots,N-1\}^n,\ \delta(S)\geq c\}.
	\]
	Then
	\[
	\lim_{n\to\infty} \frac{f_N(n,c)}{\log_2 n}=1.
	\]
\end{theorem}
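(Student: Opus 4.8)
The statement splits naturally into an upper bound and a lower bound for $f_N(n,c)$, and these require quite different techniques. For the \emph{lower bound} $\liminf_{n\to\infty} f_N(n,c)/\log_2 n \geq 1$, the plan is to show that \emph{every} subset $S\subseteq\{0,1,\dots,N-1\}^n$ of density at least $c$ contains a combinatorial hypercube of dimension $(1-o(1))\log_2 n$. The key reduction is to first pass from the alphabet of size $N$ to a binary problem: by a pigeonhole/averaging argument, fixing all but a $0$-$1$-valued pair of coordinate-values, a positive-density subset of $\{0,1,\dots,N-1\}^n$ restricts to a positive-density subset of a subcube $\{a,b\}^m$ with $m\geq n/\binom{N}{2}$ (or a similar bound), and a unit hypercube inside $\{a,b\}^m\cap\Conv(S)$ is exactly a unimodular image of $[0,1]^k$. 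Then one invokes a hypergraph-container or density-increment argument — morally a density Hales–Jewett / Sperner-type input — to locate, inside a dense subset of $\{0,1\}^m$, a subcube $\{0,1\}^k$ with $k$ of order $\log_2 m \sim \log_2 n$. Concretely I would use the probabilistic "random subcube" or "random restriction" method: choose a random embedding $\{0,1\}^k\hookrightarrow\{0,1\}^m$ by partitioning the $m$ coordinates into $k$ blocks and a fixed part, with $2^k \ll \sqrt{m}$ so that the $2^k$ resulting vertices behave quasi-independently; a second-moment or Lovász Local Lemma argument then shows that with positive probability all $2^k$ vertices land in $S$, provided $2^k = o(\log m / \log(1/c))$-type bounds hold — pushing $k$ up to $(1-o(1))\log_2 m$.

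For the \emph{upper bound} $\limsup_{n\to\infty} f_N(n,c)/\log_2 n \leq 1$, the plan is to exhibit, for each $n$, a set $S_n\subseteq\{0,1\}^n\subseteq\{0,1,\dots,N-1\}^n$ with $\delta(S_n)\geq c$ but $M(\Conv(S_n)) \leq (1+o(1))\log_2 n$. A natural candidate is a Hamming ball or a "slice" $S_n = \{x\in\{0,1\}^n : \sum x_i \in I\}$ for a suitable interval $I$ of Hamming weights centered at $n/2$; by standard binomial estimates such a slice has density bounded below by a constant once $|I|\asymp\sqrt{n}$. One then argues that $\Conv(S_n)$ cannot contain a unit hypercube of dimension much larger than $\log_2 n$: any unimodular image $A([0,1]^k)$ has $2^k$ vertices, all lattice points, hence all lying in $S_n$; but the affine structure forces these $2^k$ points to span a wide range of Hamming weights (a hypercube of dimension $k$ embedded affinely in $\{0,1\}^n$ realizes weights spanning an interval of length $\geq k$ along the appropriate direction, or one can directly bound using that the $2^k$ vertices of a genuine combinatorial subcube hit weights $0,1,\dots,k$ along that subcube), so $k \lesssim |I| \asymp \sqrt n$ is too weak — instead one needs a finer combinatorial-geometry lemma showing $2^k$ vertices of a lattice-hypercube inside a slice force $k = O(\log n)$, e.g. by counting lattice points: $|\Conv(A([0,1]^k))\cap\Z^n| \geq$ (number of lattice points contributed) competes against $|S_n|\leq 2^n$, but more sharply one shows the hypercube's vertices are affinely independent in a way incompatible with all lying in a thin slice unless $k=O(\log n)$.

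The \textbf{main obstacle} I anticipate is the lower bound, specifically getting the constant $1$ (not merely some positive constant) in $\liminf f_N(n,c)/\log_2 n \geq 1$. A crude container/pigeonhole argument readily gives a hypercube of dimension $\Omega(\log n)$ with an implied constant depending on $c$ and $N$; squeezing this to $(1-o(1))\log_2 n$ uniformly is the delicate part. I expect this to require a careful Lovász Local Lemma computation: after the random-block embedding $\{0,1\}^k\hookrightarrow\{0,1\}^m$, the "bad event" that a given vertex misses $S$ has probability roughly $1-c'$ for a density $c'$ close to $c$, these events for the $2^k$ vertices have bounded dependency once the blocks are large (dependency degree $\sim 2^k$), and the LLL criterion $e\cdot(1-c')\cdot(2^k) \leq 1$ forces $2^k \leq 1/(e(1-c'))$ — a \emph{constant}, not growing. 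So the naive LLL is insufficient, and one must instead iterate: apply a density-increment/restriction step so that after fixing a bounded hypercube one still has density bounded below, then recurse, or better, use the \emph{lopsided} LLL together with the fact that on most of the random subcube the induced density stays near $c$ (a concentration statement), allowing $k$ to grow to $(1-o(1))\log_2 m$. Alternatively — and this is probably the cleanest route — one reduces directly to a known theorem on the size of the largest combinatorial subcube in a dense subset of $\{0,1\}^m$, which is classically $\Theta(\log m)$ with the right constant, and then the whole argument hinges on correctly transferring that via \cref{not:M(S)} and the $N\to 2$ reduction while tracking that $\log_2 m = \log_2 n - O(1)$.
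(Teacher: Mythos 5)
Your upper bound construction fails, and this is the fatal gap. First, the density in the theorem is relative to $\{0,1,\dots,N-1\}^n$, so for $N\ge 3$ any $S_n\subseteq\{0,1\}^n$ has $\delta(S_n)\le (2/N)^n\to 0$ and can never satisfy $\delta(S_n)\ge c$. More seriously, even for $N=2$ the middle Hamming slice has enormous $M$-value, not $O(\log n)$: take a base point $z$ of weight $\lfloor n/2\rfloor$ and edge directions $v_i=e_{a_i}-e_{b_i}$ for $\lfloor n/2\rfloor$ disjoint pairs of coordinates with $z_{a_i}=0$, $z_{b_i}=1$. These directions are primitive, extend to a basis of $\mathbb{Z}^n$, and preserve the coordinate sum, so every vertex $z+\sum_{i\in T}v_i$ is a $0$-$1$ vector of the same weight as $z$, hence in the slice; by convexity the solid cube lies in the convex hull, so $M(S_n)\ge\lfloor n/2\rfloor$ (see \cref{def:M(P)} and \cref{not:M(S)}). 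Your heuristic that an affinely embedded cube must sweep out many Hamming weights is exactly what unimodularity does not force---coordinate-sum-zero directions like $e_a-e_b$ are permitted---so no ``finer combinatorial-geometry lemma'' of the kind you hope for can exist. This is why the paper's upper bound (\cref{prop:main-upper-bound}) is not an explicit construction but a probabilistic one: a random subset of density $c_n\to1$ avoids, with positive probability via the Lov\'asz Local Lemma, all of the at most $N^{n(r+1)}$ possible vertex sets of unimodular $r$-cubes once $r>(1+\epsilon/2)\log_2 n$, the point being that each such cube has $2^r\approx n^{1+\epsilon/2}$ vertices while there are only exponentially many cubes.

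The lower bound is also not established by your sketch. The ``known theorem'' you want to cite---that a density-$c$ subset of $\{0,1\}^m$ contains a combinatorial subcube of dimension $\Theta(\log m)$ with the right constant---is false for coordinate subcubes (the even-weight set has density $\tfrac12$ and contains no coordinate subcube even of dimension $1$), and for the unimodular-affine notion actually used here there is no off-the-shelf result: that statement is precisely the new content of the paper. Your preliminary reduction from alphabet size $N$ to a dense subset of a binary sub-box is also unjustified as stated. You correctly diagnose that a one-shot LLL or second-moment argument over a random embedding only yields bounded cube dimension, but your proposed fix---recurse while keeping the density bounded below after each step---has the wrong shape: if each step cost $O(1)$ coordinates and preserved a density bounded below, you would conclude $M(S)\gtrsim n$, contradicting the upper bound. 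The paper's recursion (\cref{l:density-of-intersections}, \cref{l:MP-vs-MSa}, \cref{prop:inductive-f-bound}) necessarily loses density: splitting off a prefix of length about $\log_N(8c^{-2})$, two fibers $T_a,T_b$ over distinct prefixes intersect in density roughly $c^2/8$, and $M(S)\ge M(T_a\cap T_b)+1$; one cube dimension is gained while $c$ degrades to roughly $c^2$ and $\Theta(\log_N(1/c))$ coordinates are consumed. The doubling of $\log(1/c)$ per step is exactly what caps the number of iterations at $(1-o(1))\log_2 n$, and that bookkeeping (carried out in \cref{prop:main-lower-bound}) is the essential mechanism missing from your proposal.
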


\begin{remark}\label{rmk:no good families from main thm}
	If $\{R(P_i)\}_i$ does not converge to $0$, then $\{M_0(P_i\cap\Z^{n_i})\}_i$ is unbounded by \cref{thm:main}, and thus $\{M(P_i)\}_i$ is unbounded by \cref{prop:bounded M and M_0}. In this way, we obtain \cref{thm:no good families of toric codes} as a consequence of \cref{thm:main}. In fact, \cref{thm:main} tells us that, if $\{R(P_i)\}_i$ converges to a value in $(0,1)$, the sequence $\{M_0(P_i\cap \Z^{n_i})\}_i$ grows at least logarithmically in $n_i$. 
\end{remark}

Our paper adds to the literature of Szemer\'edi-style results, proving that for $n\gg0$, dense subsets of $\{0,1,\dots,N-1\}^n$ must contain arbitrarily large hypercubes (via an injective integer affine transformation). 

Notice that \cref{thm:main} does not include the cases of $c=0$ or $c=1$. When $c=1$, we have $f_N(n,1)=n$ for all $n$. On the other hand, in the case when $c=0$, we show the following.

\begin{theorem}\label{thm:c=0}
	Let $N\ge 2$ be a positive integer and suppose that for each $n\ge 0$ we have a nonempty subset $S_n\subseteq \{0,1,\ldots , N-1\}^n$.  If 
	$$\limsup_{n\to\infty} \frac{\log_N(|S_n|)}{n} = 1,$$ then $\limsup_n M_0(S_n)=\infty$ as $n\to\infty$. Furthermore this is optimal in the following sense: given $\epsilon>0$, we can find a family of subsets $S_n\subseteq \{0,1,\ldots , N-1\}^n$ with $\{M_0(S_n)\}_n$ uniformly bounded and $\frac{\log_N(|S_n|)}{n}>1-\epsilon$ for every $n$. 
\end{theorem}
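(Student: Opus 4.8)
The plan is to deduce both halves from quantitative links between the size of $S$ and the invariant $M(S)$. For the first half I will prove a sparse supersaturation statement: for every positive integer $d$ there is a constant $\gamma_d=\gamma_d(N)>0$ such that any $S\subseteq\{0,1,\dots,N-1\}^n$ with $|S|>N^{(1-\gamma_d)n}$ and $n$ large satisfies $M(S)\ge d$. Granting this, if $\limsup_{n}\log_N|S_n|/n=1$ then for each fixed $d$ there are infinitely many $n$ with $\log_N|S_n|/n>1-\gamma_d$, hence (for $n$ large) $M(S_n)\ge d$; as $d$ is arbitrary, $\limsup_n M(S_n)=\infty$. For the optimality half I will exhibit, for each $\epsilon>0$, error-correcting codes of rate exceeding $1-\epsilon$ whose large minimum distance forces $M$ to stay bounded.

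To prove the supersaturation statement, partition $[n]$ into $d$ blocks $B_1,\dots,B_d$ of size $n/d$ and view $S$ as a subset of the product $X_1\times\dots\times X_d$ with $X_k=\{0,1,\dots,N-1\}^{B_k}$. By the Gowers box-norm inequality (repeated Cauchy--Schwarz, one block at a time --- no group structure is needed, as this is a genuine product of finite sets), the number of ``combinatorial boxes'' $\{\alpha_1,\beta_1\}\times\dots\times\{\alpha_d,\beta_d\}$ contained in $S$ is at least $\delta(S)^{2^d}\prod_k|X_k|^2=\delta(S)^{2^d}N^{2n}$. On the other hand the number of \emph{degenerate} boxes --- those with $\alpha_k=\beta_k$ for some $k$ --- is at most $d\,|S|\,N^{(d-1)n/d}$, since such a box is determined by one point of $S$ (its ``all-$\alpha$'' vertex) together with the remaining $d-1$ values $\beta_{k'}$. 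Comparing the two bounds, taking for instance $\gamma_d:=\tfrac{1}{2^{d+1}d}$, one gets that for $n$ large the box count strictly exceeds the degenerate count, so $S$ contains a combinatorial box with all $\alpha_k\ne\beta_k$. Setting $p:=(\alpha_1,\dots,\alpha_d)\in S$ and $v_k:=\beta_k-\alpha_k$, the vectors $v_k$ are nonzero and supported on the disjoint blocks $B_k$, and the parallelepiped $\{p+\sum_k t_k v_k:t\in[0,1]^d\}$ lies in $\Conv(S)$ because its $2^d$ vertices are exactly the points of the box. Since $[0,1]v_k$ contains a primitive unit lattice segment inside $\R^{B_k}$ and the blocks are disjoint, the product of these segments is a unimodular image of $[0,1]^d$ inside that parallelepiped, whence $M(S)\ge d$.

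For the optimality statement, fix $\epsilon>0$ and choose $\delta_0>0$ with $H_N(\delta_0)<\epsilon$, where $H_N$ is the $N$-ary entropy function (such $\delta_0$ exists since $H_N(0^+)=0$). For each large $n$ let $S_n\subseteq\{0,1,\dots,N-1\}^n$ be a code of minimum Hamming distance at least $\delta_0 n$ that is maximal with this property; the Gilbert--Varshamov bound (the usual greedy argument, covering the cube by Hamming balls of radius $\delta_0 n$) gives $|S_n|\ge N^{(1-H_N(\delta_0)-o(1))n}>N^{(1-\epsilon)n}$ for $n$ large, and for the finitely many remaining small $n$ one simply takes $S_n=\{0,1,\dots,N-1\}^n$. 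It remains to bound $M(S_n)$, for which the key point is the estimate $M(S)=O\!\big(n/d_{\min}(S)\big)$: if $\Conv(S)\supseteq A([0,1]^d)$ for a unimodular affine $A$ with edge vectors $v_1,\dots,v_d$, then $\sum_j |(v_j)_i|\le N-1$ at each coordinate $i$ (the cube fits in $[0,N-1]^n$), so $\sum_j\operatorname{wt}(v_j)\le(N-1)n$; and each $v_j$ forces $S$ to contain two distinct points a Hamming distance at most $\operatorname{wt}(v_j)$ apart, so $\operatorname{wt}(v_j)\ge d_{\min}(S)$ and therefore $d\le (N-1)n/d_{\min}(S)$. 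Applying this with $d_{\min}(S_n)\ge\delta_0 n$ yields $M(S_n)\le (N-1)/\delta_0$, independent of $n$. For $N=2$ this argument is completely clean: the $2^d$ vertices of the inscribed cube lie in $[0,1]^n$, hence are vertices of $\Conv(S)$ and thus elements of $S$, the $v_j$ genuinely have disjoint supports, and the vertices $q_\emptyset$ and $q_{\{j\}}$ differ in exactly $\operatorname{wt}(v_j)\ge d_{\min}(S)$ coordinates.

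I expect the main obstacle to be twofold. In the first half, the naive approach --- counting combinatorial cubes directly in $\{0,1,\dots,N-1\}^n$ over $\Z$, or after embedding in a cyclic group --- loses a doubly-exponential-in-$d$ factor and cannot beat the degenerate count once $d$ is more than a small constant; the device that makes the proof work is passing to the product picture $X_1\times\dots\times X_d$, where the box norm restores a clean $\delta^{2^d}$ lower bound. In the second half, the delicate point is the estimate $M(S)=O(n/d_{\min}(S))$ for general $N$: unlike the binary case, the vertices of an inscribed unit cube need not lie in $S$, so one must argue --- by pushing the cube toward the boundary of $\Conv(S)$, or by projecting onto the cube's coordinate directions and controlling how the convex hull must ``surround'' it --- that a high-dimensional inscribed cube nonetheless forces two points of $S$ to be close in Hamming distance, contradicting a large minimum distance.
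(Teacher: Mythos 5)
Your first half is correct and takes a genuinely different route from the paper. The paper deduces the first assertion by iterating its inductive bound (\cref{prop:inductive-f-bound}) through a minimal-$\liminf$ contradiction argument, whereas you prove a direct supersaturation statement: partitioning the coordinates into $d$ blocks and applying repeated Cauchy--Schwarz (the box-norm inequality, which indeed needs no group structure) gives at least $\delta(S)^{2^d}N^{2n}$ combinatorial boxes with all corners in $S$, the degenerate ones number at most $d|S|N^{n-\lfloor n/d\rfloor}$, and with $\gamma_d=2^{-(d+1)}d^{-1}$ the first count wins for large $n$. Your primitivization step is also handled correctly: replacing each edge $\beta_k-\alpha_k$ by its primitive part keeps you inside the parallelepiped spanned by the box, and since the blocks are disjoint these primitive vectors extend to a $\Z$-basis, so you really get a unimodular cube inside $\Conv(S)$ as required by \cref{not:M(S)}. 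This approach buys an explicit threshold (density at least $N^{-n/(2^{d+1}d)}$ forces $M(S)\ge d$) and is arguably cleaner and more quantitative than the paper's argument.

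The optimality half, however, has a genuine gap for every $N\ge 3$. Your construction rests on the estimate $M(S)\le (N-1)n/d_{\min}(S)$, and the step you use to prove it --- ``each edge vector $v_j$ of an inscribed unimodular cube forces two distinct points of $S$ at Hamming distance at most $\operatorname{wt}(v_j)$'' --- fails once the cube's vertices need not lie in $S$. Concretely, take $N=3$, $n=2$, $S=\{(0,0),(2,1),(1,2)\}$: the minimum Hamming distance is $2$, yet $(1,1)=\tfrac13\bigl((0,0)+(2,1)+(1,2)\bigr)\in\Conv(S)$, so the unit segment from $(1,1)$ to $(2,1)$ (edge vector $e_1$ of weight $1$) lies in $\Conv(S)$ while no two points of $S$ are at Hamming distance $\le 1$. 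You flag exactly this as ``the delicate point,'' but it is not a routine repair: it is doubtful the estimate itself holds under the convex-hull definition of $M$. For instance, scaling a binary code of rate close to $1$ and relative distance $\delta_0$ into $\{0,N-1\}^n$ with $N$ large produces a set whose minimum distance is linear in $n$ but whose convex hull is expected to contain a central axis-parallel unit cube of full dimension, so $M$ need not be $O(n/d_{\min})$. (For $N=2$ your argument is complete, since every lattice point of $\Conv(S)\subseteq[0,1]^n$ is an extreme point and hence lies in $S$.) The paper sidesteps minimum distance entirely: it uses the Lov\'asz Local Lemma to produce $S_n$ of density $N^{-\lfloor\epsilon n\rfloor}$ containing no injective affine image of $\{0,1\}^r$ at all, as in \cref{prop:main-upper-bound}, and bounds $M(S_n)\le r$ that way. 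So: first half correct by a nice alternative method; second half incomplete for all $N\ge 3$.
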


The quantity $\limsup_{i} \frac{\log_N(|P_i\cap \bZ^{n_i}|)}{n_i}$ is known as the \emph{entropy} $\ent(\{P_i\}_i)$; see
\cite[Equation (2)]{Ceccherini-Silberstein}. Thus, \cref{thm:c=0} yields the following corollary which shows that, even if one replaces the condition on $\{R(P_i)\}_i$ in \cref{def:good family of toric codes} with the condition that $\ent(\{P_i\}_i)=1$, then such infinite families of toric codes still do not exist.

\begin{corollary}
	Let $\{P_i\}_i$ be an infinite family of toric codes. If $\ent(\{P_i\}_i)=1$, then $d(P_i)\to0$ as $i\to\infty$.
\end{corollary}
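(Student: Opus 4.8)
The plan is to derive this from \cref{thm:c=0} together with the result of Dolorfino et al.\ recalled in the introduction: for an infinite family of toric codes $\{P_i\}_i$, unboundedness of $\{M(P_i)\}_i$ already forces $d(P_i)\to 0$ \cite[Proposition 4.2]{dolorfino2022good-families-toric}. Write $N:=q-1$; we may assume $N\geq 2$, since when $q=2$ the only toric code is trivial. Since $\ent(\{P_i\}_i)=\limsup_i \frac{\log_N|P_i\cap\bZ^{n_i}|}{n_i}=1$ by hypothesis, it suffices to show that $\{M(P_i)\}_i$ is unbounded.

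To invoke \cref{thm:c=0}, which takes one subset of $\{0,1,\dots,N-1\}^n$ for every $n\geq 0$, I would first pick a subsequence $\{P_{i_k}\}_k$ along which $\frac{\log_N|P_{i_k}\cap\bZ^{n_{i_k}}|}{n_{i_k}}\to 1$; since $n_i\to\infty$, the dimensions $n_{i_k}$ take infinitely many values, so after thinning further I may assume $n_{i_1}<n_{i_2}<\cdots$ with the ratio still tending to $1$. Define $S_n:=P_{i_k}\cap\bZ^n$ when $n=n_{i_k}$, and $S_n:=\{\mathbf{0}\}$ otherwise. Then $\frac{\log_N|S_n|}{n}=0$ on the padded dimensions and tends to $1$ along $n=n_{i_k}$, so $\limsup_{n\to\infty}\frac{\log_N|S_n|}{n}=1$ and \cref{thm:c=0} gives $\limsup_n M(S_n)=\infty$. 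But $M(S_n)=0$ for every padded $n$, so this $\limsup$ is realized along $n=n_{i_k}$, hence $\limsup_k M(P_{i_k})=\infty$ and $\{M(P_i)\}_i$ is unbounded. Applying \cite[Proposition 4.2]{dolorfino2022good-families-toric} then yields $d(P_i)\to 0$.

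There is no deep obstacle here: \cref{thm:c=0} and \cite[Proposition 4.2]{dolorfino2022good-families-toric} carry all the weight, and the corollary is essentially their composition. The only point that needs a little care is the reindexing above --- simultaneously extracting a subsequence that witnesses the entropy $\limsup$ and has strictly increasing dimensions, and choosing filler sets (here the origin) small enough that they perturb neither the entropy $\limsup$ nor the $M$-limsup being tracked.
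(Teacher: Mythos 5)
Your proposal is correct and follows essentially the same route as the paper, which also deduces the corollary by combining \cref{thm:c=0} (to get $\limsup_i M(P_i)=\infty$) with \cite[Proposition 4.2]{dolorfino2022good-families-toric}. The reindexing/padding step you spell out is just the bookkeeping the paper leaves implicit, and it is handled correctly.
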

\begin{proof}
	\cref{thm:c=0} shows that $\limsup_i M_0(P_i)=\infty$, and \cite[Proposition 4.2]{dolorfino2022good-families-toric} combined with \cref{prop:bounded M and M_0} imply $d(P_i)\to0$.
\end{proof}

The remainder of this paper is dedicated to proving \cref{thm:main} and \cref{thm:c=0}.

\section*{Acknowledgments}
It is a pleasure to thank Ajneet Dhillon, Brett Nasserden, Sayantan Roy-Chowdhury, Owen Sharpe, and Sophie Spirkl. We would like to especially thank Jim Geelen for enlightening conversations. Lastly, we want to thank the anonymous referees for their helpful feedback, and in particular their comments that inspired us to introduce \cref{def:M_0(S)}. 

The first and third-named authors were partially supported by Discovery Grants from the National Science and Engineering Research Council of Canada (NSERC); in addition, the third-named author was supported by a Mathematics Faculty Research Chair from the University of Waterloo. This paper is the outcome of an NSERC-USRA project; we thank NSERC for their support.

\section{Lower bound for the growth rate of $f_N$}

Throughout this paper, $N\geq2$ is fixed so we suppress it in the notation $f_N$. All logarithms without an explicit base are taken with base $N$. For any non-negative integer $k$, we let
\[
[k]:=\{0,1\dots,k-1\}.
\]
Our goal in this section is to give the following lower bound for $f$.

\begin{proposition}\label{prop:main-lower-bound}
	For all $c\in(0,1]$, 
	\[
	\liminf_{n\to\infty}\frac{f(n,c)}{\log_2(n)}\geq1.
	\]
\end{proposition}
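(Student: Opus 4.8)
The plan is to obtain a lower bound on $M(S)$ whenever $S\subseteq[N]^n$ has density at least $c$, showing it must grow at least like $\log_2 n$. The natural strategy is a pigeonhole/counting argument combined with a recursive slicing of the hypercube. The key observation is that a unit hypercube of dimension $m$ inside $\Conv(S)$ can be produced combinatorially: if we can find $m$ coordinate directions $i_1,\dots,i_m$ and two values $a_j<b_j$ (with $b_j=a_j+1$, or more generally any two values, since a unimodular transformation can rescale) in each such direction, together with a "base point" assignment on the remaining coordinates, such that all $2^m$ points obtained by independently choosing coordinate $i_j\in\{a_j,b_j\}$ lie in $S$, then $S$ contains a combinatorial subcube, hence $\Conv(S)$ contains an affine unit cube of dimension $m$, so $M(S)\ge m$. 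Thus it suffices to show that a density-$c$ subset of $[N]^n$ contains a combinatorial subcube of dimension roughly $\log_2 n$.

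First I would set up the recursion on $n$. Fix $c$ and think of $[N]^n=[N]\times[N]^{n-1}$. Writing $S_t=\{x\in[N]^{n-1}\mid (t,x)\in S\}$ for the $N$ fibre-slices over the first coordinate, the average density of the slices is at least $c$, so at least a $c/2$ fraction (say) of the values $t$ index a slice of density at least $c/2$ — more carefully, I would track how much density can be lost and choose constants so the argument survives $\Theta(\log n)$ levels of recursion. The cleaner approach: among the $N$ slices, find two indices $a\ne b$ with $\delta(S_a\cap S_b)$ still bounded below by some $c'$ depending only on $c$ (and $N$); this is where one pays a constant factor in density but gains one dimension of the subcube, with the first coordinate playing the role of direction $i_1$ and values $\{a,b\}$. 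Iterating, after $m$ steps we have fixed $m$ coordinate directions and are left with a subset of $[N]^{n-m}$ of density at least $g^{(m)}(c)$ where $g$ is the (constant, $N$-dependent) density-contraction at each step; the whole construction goes through as long as this density stays positive and $n-m\ge 1$. Since each step at worst squares (or multiplies by a fixed constant) the density, after $m$ steps the density is at least something like $c^{2^m}/(\text{const})^{2^m}$, which remains positive as long as $2^m \lesssim \log n$; unwinding, $M(S)\ge m \gtrsim \log_2\log n$...

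Here is the actual obstacle, and the reason the bound is $\log_2 n$ rather than $\log_2\log n$: a naive "find two equal-ish slices" step loses a constant factor in density at every level, which only buys $\log\log n$. To get the full $\log_2 n$ one must be smarter — the right tool is a Varnavides/counting argument or the sunflower-type/Sauer–Shelah phenomenon: a set of density $c$ in $[N]^n$ must contain a combinatorial subcube of dimension $\Omega(\log n)$ because the number of subsets of $[n]$ that can be "shattered" is large. Concretely, I would invoke a Sauer–Shelah style estimate: view the density-$c$ subset, pass to the Boolean case by a one-coordinate reduction (pick for each coordinate a pair of values), and then the fact that a subset of $\{0,1\}^n$ of size $\ge 2^{n(1-o(1))}$, or even just of positive density, shatters a set of coordinates of size $\Omega(\log n)$ — this is exactly a VC-dimension lower bound. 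The clean statement I expect to use: any family $\mathcal F\subseteq 2^{[n]}$ with $|\mathcal F|\ge c\cdot 2^n$ has VC dimension $\ge (1-o(1))\log_2 n$, which follows from Sauer–Shelah since $\sum_{i\le d}\binom{n}{i} < c\,2^n$ once $d$ is a hair below $\log_2 n$. Shattering a coordinate set $T$ of size $d$ means precisely that the corresponding combinatorial subcube of dimension $d$ sits inside our set, giving $M(S)\ge d\ge (1-o(1))\log_2 n$, hence $\liminf_n f(n,c)/\log_2 n\ge 1$.

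The main obstacle, then, is reducing the $N$-ary problem to the Boolean one without destroying density (a per-coordinate pair-selection plus pigeonhole over the $\binom{N}{2}^{?}$ choices — actually one must be careful here, since choosing a pair of values per coordinate independently is too lossy; the fix is to choose the pair globally or to first restrict to two values in a positive fraction of coordinates via another pigeonhole, then recurse), and then applying Sauer–Shelah with the right asymptotics to land exactly at constant $1$ rather than a smaller constant. I would structure the write-up as: (1) a lemma that $M(S)$ is at least the largest dimension of a combinatorial subcube in $S$; (2) a reduction lemma from $[N]^n$ to $\{0,1\}^{n'}$ with $n'=\Omega(n)$ preserving positive density; (3) the Sauer–Shelah bound giving a shattered set of size $(1-o(1))\log_2 n'$; (4) assembling these to conclude $\liminf f(n,c)/\log_2 n\ge 1$.
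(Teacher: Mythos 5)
Your proposal has a fatal gap at its central step: shattering in the Sauer--Shelah/VC sense is \emph{not} the same as containing a combinatorial subcube. Shattering a coordinate set $T$ only asserts that every pattern in $\{0,1\}^T$ is realized by \emph{some} element of the family, with the coordinates outside $T$ allowed to vary from pattern to pattern; a combinatorial subcube (which is what you need to conclude $M(S)\ge |T|$) requires a single fixed assignment outside $T$ that works simultaneously for all $2^{|T|}$ patterns. These notions are quantitatively far apart: a family of density $c$ in $\{0,1\}^n$ has VC dimension on the order of $n/2$ (Sauer--Shelah forces $\sum_{i\le d}\binom{n}{i}\ge c2^n$, so $d$ must be nearly $n/2$), whereas the paper's upper bound (Proposition \ref{prop:main-upper-bound}, via the Lov\'asz Local Lemma) shows that the largest affine cube one can guarantee has dimension only about $\log_2 n$. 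So any argument identifying the two would ``prove'' far more than is true, and step (3)--(4) of your outline cannot be repaired within the VC framework. The Boolean reduction in step (2) is also left unresolved, but it is moot given the main error.

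Ironically, the approach you abandoned is the one that works, and you abandoned it because of a miscalculation. In the recursive slicing argument the density roughly squares at each step, but the cost in coordinates at each step need only be proportional to $\log$ of the reciprocal of the \emph{current} density: the paper slices off a prefix of length $r=\lceil\log(8c^{-2})\rceil$ (a whole block, not one coordinate --- this also fixes the problem that a single coordinate has only $N$ values, too few to find two dense slices once the density drops below roughly $1/N$), and Lemmas \ref{l:density-of-intersections} and \ref{l:MP-vs-MSa} give $f(n,c)\ge f\bigl(n-\lceil\log(8c^{-2})\rceil,\,2c^2(c+4)^{-2}\bigr)+1$ (Proposition \ref{prop:inductive-f-bound}). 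After $m$ iterations the density is about $c^{\alpha^m}$ with $\alpha$ slightly above $2$, so the total number of coordinates consumed is geometric, about $\frac{\alpha^{m+1}}{\alpha-1}\log(1/c)$; the constraint is therefore $\alpha^m\lesssim n$, giving $m\approx\log_\alpha n\to\log_2 n$ as $\alpha\to2$, not the $\log_2\log n$ you estimated (your constraint ``$2^m\lesssim\log n$'' has no basis; the density stays positive as long as the block lengths fit inside $n$). Note also that the paper's cube is not a coordinate subcube: the new cube direction at each step is the vector $b-a$ joining the two prefixes, which is why Lemma \ref{l:MP-vs-MSa} works with arbitrary distinct prefixes.
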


We begin with a preliminary result which says that if we have sufficiently many sets of density at least $c$, then there must be a sufficiently large intersection.

\begin{lemma}\label{l:density-of-intersections}
	Suppose $c\in (0,1]$, $t\ge \frac{2}{c}$, and we have subsets $X_1,\ldots,X_t\subseteq[k]$ each of density at least $c$. Then there exist distinct $i$ and $j$ such that
	\[
	\delta(X_i\cap X_j)\geq \frac{2}{(\frac{2}{c}+1)^2}
	\]
\end{lemma}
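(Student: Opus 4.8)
The plan is a straightforward double-counting argument on element multiplicities, capped off by pigeonhole over pairs. First I would reduce to the extremal case. Since $t\ge \frac2c$ is an integer, we in fact have $t\ge t':=\lceil \frac2c\rceil$, and it suffices to establish the conclusion using only the first $t'$ of the given sets; so I may assume $t=t'=\lceil\frac2c\rceil$. This choice has two convenient consequences: $tc\ge 2$, and both $t\le \frac2c+1$ and $t-1\le\frac2c$ (the latter because $t-1=\lceil\frac2c\rceil-1<\frac2c$).

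Next, for each point $x\in[k]$ let $d(x)$ be the number of indices $i$ with $x\in X_i$. Counting incidences two ways gives $\sum_{x\in[k]}d(x)=\sum_{i=1}^{t}|X_i|\ge tck$, and counting, for each point, the pairs of sets it lies in gives $\sum_{1\le i<j\le t}|X_i\cap X_j|=\sum_{x\in[k]}\binom{d(x)}{2}$. Now I would invoke the elementary inequality $\binom{d}{2}\ge d-1$, valid for every integer $d\ge0$ (it is just $(d-1)(d-2)\ge0$, and it also holds trivially for $d=0$; alternatively one can use convexity of $d\mapsto\binom d2$). This yields
\[
\sum_{1\le i<j\le t}|X_i\cap X_j|\ \ge\ \sum_{x\in[k]}(d(x)-1)\ =\ \Big(\sum_{i}|X_i|\Big)-k\ \ge\ (tc-1)k\ \ge\ k,
\]
where the last step uses $tc\ge 2$.

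Finally I would apply pigeonhole over the $\binom t2$ pairs: some pair $\{i,j\}$ satisfies $|X_i\cap X_j|\ge k/\binom t2$. Using $t\le \frac2c+1$ and $t-1\le\frac2c$ we bound $\binom t2=\tfrac12 t(t-1)\le \tfrac12\big(\tfrac2c+1\big)\cdot\tfrac2c=\tfrac{2+c}{c^2}$, hence $\delta(X_i\cap X_j)\ge \tfrac{c^2}{2+c}$. Since $2+c\ge2$, we get $\tfrac{c^2}{2+c}\ge \tfrac{2c^2}{(2+c)^2}=\tfrac{2}{(\frac2c+1)^2}$, which is exactly the claimed bound.

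I do not anticipate a real obstacle: this is routine double counting. The only points meriting a little care are verifying the integer inequality $\binom d2\ge d-1$ all the way down to $d=0,1$; confirming the reduction to $t=\lceil\frac2c\rceil$ is legitimate (it is, since one may simply discard the surplus sets — this only strengthens what must be proved); and handling the possible non-integrality of $\frac2c$ in the final estimate, which is why one bounds $t$ and $t-1$ separately rather than naively substituting $t=\frac2c$.
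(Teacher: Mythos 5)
Your proof is correct and is essentially the paper's argument: your double count of $\sum_x \binom{d(x)}{2}$ together with $\binom{d}{2}\ge d-1$ is just a hands-on derivation of the inclusion--exclusion bound $\sum_{i<j}|X_i\cap X_j|\ge \sum_i|X_i|-k$ that the paper invokes directly, and the reduction to $t=\lceil 2/c\rceil$ plus pigeonhole over the $\binom{t}{2}$ pairs mirrors the paper's averaging step (your intermediate constant $c^2/(2+c)$ is marginally sharper before you relax it to the stated bound).
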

\begin{proof}
	Shrinking our collection of sets, if necessary, we may assume $t=\lceil \frac{2}{c}\rceil$. Then the inclusion-exclusion principle says
	\[
	k\geq \sum_i|X_i| - \sum_{i<j}|X_i\cap X_j|.
	\]
	Dividing by $k$ and letting $m=\max_{i<j}\delta(X_i\cap X_j)$, we see
	\[
	m{t\choose 2}\geq\sum_i\delta(X_i)-1\geq tc-1.
	\]
	Since $tc\geq2$ and ${t\choose 2}\leq\frac{t^2}{2}\leq \frac{1}{2}(\frac{2}{c}+1)^2$, we see $m\geq \frac{2}{(\frac{2}{c}+1)^2}$, as desired.
\end{proof}

Next, we relate $M_0(S)$ for a given set $S$ to the $M_0$-value of a set living in a lower-dimensional space by considering elements in $S$ with a specified ``prefix". 

\begin{lemma}\label{l:MP-vs-MSa}
	Let $S\subseteq[N]^n$, $1\leq r<n$, and $a,b\in[N]^r$ distinct. We let
	\[
	T_x:=\{p\in[N]^{n-r}\mid (x,p)\in S\}
	\]
	for $x\in\{a,b\}$. Then
	\[
	M_0(S)\geq M_0(T_a\cap T_b)+1.
	\]
\end{lemma}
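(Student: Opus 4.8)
The plan is to take a largest unit cube sitting inside $\Conv(T_a\cap T_b)$ and ``thicken'' it by one dimension, using the lattice segment joining $a$ and $b$ as the new edge. Throughout write $C:=\Conv(T_a\cap T_b)$, and note that we may assume $T_a\cap T_b\neq\emptyset$, since otherwise $M(T_a\cap T_b)=M(\emptyset)$ is $-\infty$ (the empty set contains no affine image of a point) and there is nothing to prove. Set $m:=M(T_a\cap T_b)$ and, using \cref{not:M(S)} and \cref{def:M(P)}, fix a unimodular affine transformation $A\colon\R^m\to\R^{n-r}$, say $A(u)=Lu+v$ with $L$ an integer matrix whose columns extend to a $\bZ$-basis of $\bZ^{n-r}$ and $v\in\bZ^{n-r}$, such that $A([0,1]^m)\subseteq C$.

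The geometric input is this: for every $q\in T_a\cap T_b$ both $a\times q$ and $b\times q$ lie in $S$, hence the whole segment between them lies in $\Conv(S)$; identifying $\R^n=\R^r\times\R^{n-r}$, this segment is $[a,b]\times\{q\}$, where $[a,b]$ denotes the line segment from $a$ to $b$ in $\R^r$. Taking convex combinations over $q$ (and using convexity of $\Conv(S)$) shows $[a,b]\times C\subseteq\Conv(S)$, and in particular $[a,b]\times A([0,1]^m)\subseteq\Conv(S)$.

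The one delicate point is that the new edge direction must be a \emph{primitive} lattice vector for the resulting affine map to be unimodular; one cannot simply use $b-a$, since $b-a$ need not be primitive (e.g.\ $a=0$, $b=2e_1$). To fix this, let $d\ge 1$ be the gcd of the entries of $b-a$ and put $w:=(b-a)/d\in\bZ^r$, which is primitive, and observe $[a,a+w]\subseteq[a,b]$, so $[a,a+w]\times A([0,1]^m)\subseteq\Conv(S)$. Define $A'\colon\R^{m+1}\to\R^n=\R^r\times\R^{n-r}$ by $A'(u,s):=(a+sw,\ Lu+v)$. Then $A'([0,1]^{m+1})=[a,a+w]\times A([0,1]^m)\subseteq\Conv(S)$. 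Moreover $A'$ is affine with integer translation vector $(a,v)\in\bZ^n$, and its integer linear part has columns $(0,L_1),\dots,(0,L_m),(w,0)$, where $L_1,\dots,L_m$ are the columns of $L$; since $w$ extends to a $\bZ$-basis of $\bZ^r$ and $L_1,\dots,L_m$ extend to a $\bZ$-basis of $\bZ^{n-r}$, these $m+1$ columns extend (block-triangularly) to a $\bZ$-basis of $\bZ^n$, so the linear part extends to an element of $\mathrm{GL}_n(\bZ)$ and $A'$ is a unimodular affine transformation. Hence $M(S)=M(\Conv(S))\ge m+1=M(T_a\cap T_b)+1$.

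I expect the only real obstacle in a careful write-up to be precisely the unimodularity check in the last step: verifying that replacing $b-a$ by its primitive multiple $w$ is both legitimate (the shorter segment still lies over $\Conv(S)$, because $d\ge1$) and necessary (so that the linear part of $A'$ is completable to a $\bZ$-basis of $\bZ^n$). Everything else is a routine manipulation of convex hulls and block matrices.
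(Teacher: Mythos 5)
Your proof is correct and follows essentially the same route as the paper's: both extend the $m$-cube inside $\Conv(T_a\cap T_b)$ by one new edge direction determined by the pair $a,b$, using a block-triangular linear part and the translation $(a,v)$. In fact your write-up is slightly more careful than the paper's, which uses $b-a$ itself as the new column (without passing to the primitive vector $(b-a)/d$) and speaks of an affine injection of $\{0,1\}^m$ into $T_a\cap T_b$ rather than working with the convex hull; your primitivity adjustment and hull bookkeeping tighten these details without changing the underlying argument.
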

\begin{proof}
	First, if $T_a\cap T_b=\varnothing$, then the desired result holds because $M_0(S)\geq -\infty$, so we may assume going forward that $T_a\cap T_b\neq\varnothing$. Let $m=M_0(T_a\cap T_b)\geq 0$. By definition, we have an integer affine injection $\iota\colon\{0,1\}^m\to T_a\cap T_b$. We extend this to a map $\iota'\colon\{0,1\}^{m+1}\to S$ where $\iota'(0,y)=(a,\iota(y))$ and $\iota'(1,y)=(b,\iota(y))$. Note that $\iota'$ is an injection because $\iota$ is injective and $a\neq b$. Furthermore, $\iota'$ is an integer affine map; indeed, there exists an integer matrix $A$ and vector $z$ such that $\iota(y)=Ay+z$, hence
	\[
	\iota'(w,y)=
	\begin{bmatrix}
		b-a & 0 \\ 0 & A
	\end{bmatrix}
	\begin{bmatrix}
		w\\
		y
	\end{bmatrix}
	+ 
	\begin{bmatrix}
		a\\
		z
	\end{bmatrix}.
	\]
	As a result, $M_0(S)\geq m+1$.
\end{proof}

Combining the above two lemmas, we obtain an inductive lower bound on $f$.

\begin{proposition}\label{prop:inductive-f-bound}
	For $c\in(0,1]$ 
	and $n>\lceil\log(8c^{-2})\rceil$, we have
	\[
	f(n,c) \ge f\big(n-\lceil\log(8c^{-2})\rceil, 2c^2(c+4)^{-2}\big)+1.
	\]
\end{proposition}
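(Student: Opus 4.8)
The plan is to fix an arbitrary $S\subseteq[N]^n$ with $\delta(S)\geq c$, show that $M(S)\geq f\big(n-r,\,2c^2(c+4)^{-2}\big)+1$ where $r:=\lceil\log(8c^{-2})\rceil$, and then take the infimum over all such $S$. Note that $r\geq 1$ (since $8c^{-2}>1$ and $N\geq 2$) and $r<n$ by hypothesis, so the prefix-slicing below is legitimate and \cref{l:MP-vs-MSa} will be applicable.

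For each prefix $a\in[N]^r$ put $T_a:=\{p\in[N]^{n-r}\mid a\times p\in S\}$, so that $\sum_{a\in[N]^r}|T_a|=|S|\geq cN^n$, hence $\sum_a\delta(T_a)=|S|/N^{n-r}\geq cN^r$, where the densities $\delta(T_a)$ are taken inside $[N]^{n-r}$. An averaging argument then makes the set $A:=\{a\in[N]^r\mid\delta(T_a)\geq c/2\}$ large: from
\[
cN^r\ \leq\ \sum_{a}\delta(T_a)\ \leq\ |A|+(N^r-|A|)\tfrac{c}{2}
\]
one obtains $|A|\geq cN^r/2\geq 4/c$, and this last inequality is exactly where the choice $N^r\geq 8c^{-2}$ is used.

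Next, apply \cref{l:density-of-intersections} to the subsets $\{T_a\}_{a\in A}$ of $[N]^{n-r}$ with density threshold $c/2\in(0,1]$ and $t=|A|\geq 4/c=2/(c/2)$: there exist distinct $a,b\in A$ with
\[
\delta(T_a\cap T_b)\ \geq\ \frac{2}{\bigl(\tfrac{4}{c}+1\bigr)^{2}}\ =\ 2c^2(c+4)^{-2}.
\]
Since $2c^2(c+4)^{-2}\in(0,1]$, the definition of $f$ gives $M(T_a\cap T_b)\geq f\big(n-r,\,2c^2(c+4)^{-2}\big)$; applying \cref{l:MP-vs-MSa} with the distinct prefixes $a,b\in[N]^r$ yields $M(S)\geq M(T_a\cap T_b)+1$. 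Taking the infimum over all $S\subseteq[N]^n$ with $\delta(S)\geq c$ then proves the proposition.

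There is no real conceptual obstacle: the argument is a routine ``slice, average, intersect'' pigeonhole. The only point requiring care is calibrating $r$ — it must be just large enough that the averaging step delivers at least $2/(c/2)$ slices of density $\geq c/2$, which is precisely what $N^r\geq 8c^{-2}$ guarantees — together with correctly tracking how the density constant degrades (from $c$, to $c/2$, to $2c^2(c+4)^{-2}$) as it passes through \cref{l:density-of-intersections} and \cref{l:MP-vs-MSa}.
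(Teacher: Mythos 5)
Your proof is correct and follows essentially the same route as the paper: slice $S$ by prefixes of length $r=\lceil\log(8c^{-2})\rceil$, use averaging to show at least $\tfrac{c}{2}N^r\geq 4/c$ slices have density at least $c/2$, then combine \cref{l:density-of-intersections} and \cref{l:MP-vs-MSa} to pass to $f(n-r,\,2c^2(c+4)^{-2})+1$. The only cosmetic difference is that the paper keeps $r$ general and specializes at the end, while you fix $r$ from the outset; the estimates are identical.
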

\begin{proof}
	Let $S\subseteq[N]^n$ with $\delta(S)\geq c$. For each $1\leq r<n$, 
	let
	\[
	k_r := \left|\left\{a\in[N]^{r} \mid \delta(T_a) \geq \frac{c}{2}\right\}\right|
	\]
	with $T_a$ defined as in Lemma \ref{l:MP-vs-MSa}. Since $S=\coprod_a(a\times T_a)$, we see
	\begin{align*}
		c &\leq \delta(S)=\frac{1}{N^n}\sum_a|T_a|=\frac{1}{N^r}\sum_a\delta(T_a)\\
		&\leq \frac{k_r}{N^r} + \frac{N^r}{N^r}\cdot\frac{c}{2} = \frac{k_r}{N^r} + \frac{c}{2};
	\end{align*}
	the first inequality on the second line uses the bound $\delta(T_a)\leq1$ for all $a$ with $\delta(T_a)\geq \frac{c}{2}$, of which there are $k_r$, and uses the bound $\delta(T_a)\leq\frac{c}{2}$ for the remaining $a$, of which there are at most $N^r$.
	
	It follows that for $r\geq\log(8c^{-2})$, we have 
	\[
	k_r\geq\frac{c}{2}N^r\geq\frac{2}{c/2}.
	\]
	Hence, considering those $a$ with $\delta(T_a)\geq\frac{c}{2}$, Lemma \ref{l:density-of-intersections} tells us there exist $a\neq b$ with $\delta(T_a\cap T_b)\geq\frac{2c^2}{(c+4)^2}$. It follows then from Lemma \ref{l:MP-vs-MSa} that
	\[
	M_0(S)\geq M_0(T_a\cap T_b)+1\geq f(n-r,2c^2(c+4)^{-2})+1.
	\]
	In particular, we may take $r=\lceil\log(8c^{-2})\rceil$.
\end{proof}

We turn now to the proof of the main result of this section.

\begin{proof}[{Proof of Proposition \ref{prop:main-lower-bound}}]
	For all sufficiently small $\epsilon>0$, we have
	\begin{equation}\label{eqn:epsilon-suff-small-liminf}
		\frac{1}{1+\log_2(1+\frac{\epsilon}{6})}\geq 1-\frac{\epsilon}{2}
	\end{equation}
	since the Taylor expansion shows $(1+\log_2(1+\frac{x}{6}))^{-1}=1-\frac{x}{6\log_e 2} + O(x^2)$ as $x\to 0$. By definition, $f(n,c)\geq f(n,c')$ if $c'\leq c$. Thus, we may freely replace $c$ by a smaller value. In particular, we may assume
	\[
	\log c\leq \frac{3}{\epsilon}\min\left(\log\frac{2}{25}, -(1+\log 8)\right).
	\]
	Letting $\alpha:=2+\frac{\epsilon}{3}$, the above two inequalities imply
	\[
	\frac{2c^2}{(c+4)^2}\geq \frac{2c^2}{25}\geq c^\alpha\quad\textrm{and}\quad  n-\lceil\log(8c^{-2})\rceil\geq n+\alpha\log c.
	\]
	Then Proposition \ref{prop:inductive-f-bound} tells us if $n>\lceil\log(8c^{-2})\rceil$, then
	\[
	f(n,c) \ge f\big(n-\lceil\log(8c^{-2})\rceil, 2c^2(c+4)^{-2}\big)+1\ge f\big(n-\lceil\log(8c^{-2})\rceil, c^\alpha\big)+1.
	\]
	Thus, if we let
	\[
	h\colon\bR\times(0,2^{-1}]\to \bR\times(0,2^{-1}],\quad h(x,y):=(x+\alpha\log y,y^\alpha),
	\]
	we find
	\[
	f(n,c)\geq\max\{m\mid \textrm{$x$-coordinate\ of\ }h^m(n,c)\geq1\}
	\]
	where $h^m$ denotes $m$-fold composition. Since $h^m(x,y)=(x+\frac{\alpha^{m+1}-\alpha}{\alpha-1}\log y,y^{\alpha^m})$, we see
	\[
	f(n,c)\geq \bigg\lfloor\log_\alpha\bigg(\frac{(1-n)(\alpha-1)}{\alpha\log c}+1\bigg)\bigg\rfloor.
	\]
	For $n\geq2$, we have $\log_\alpha(\frac{n}{n-1})\leq\log_\alpha 2$, and so
	\[
	f(n,c)\geq\log_\alpha\bigg(\frac{(n-1)(\alpha-1)}{\alpha\log(c^{-1})}\bigg)-1\geq\log_\alpha n-\beta(c)
	\]
	where
	\[
	\beta(c):=\log_\alpha 2 + \log_\alpha\log(c^{-1}) - \log_\alpha(\alpha-1) + 2
	\]
	is a constant depending only on $c$.
	
	Next,
	\[
	\log_\alpha n = \frac{\log_2 n}{\log_2(2+\frac{\epsilon}{3})}=\frac{\log_2 n}{1+\log_2(1+\frac{\epsilon}{6})}\geq \left(1-\frac{\epsilon}{2}\right)\log_2 n,
	\]
	where the last inequality uses \cref{eqn:epsilon-suff-small-liminf}. Hence,
	\[
	f(n,c)\geq \left(1-\frac{\epsilon}{2}\right)\log_2 n-\beta(c)\geq (1-\epsilon)\log_2 n
	\]
	for $n$ sufficiently large.
\end{proof}

\section{Proof of the main theorems}

First, we quickly prove \cref{prop:bounded M and M_0}.

\begin{proof}[{Proof of \cref{prop:bounded M and M_0}}]
	The reverse direction follows from \cref{rmk:compare M and M_0}. Let $L(-)$ denote the full Minkowski length as in \cite[Definition 4.5]{dolorfino2022good-families-toric}. If $\{M(P_i)\}_i$ is bounded, then \cite[Proposition 4.7]{dolorfino2022good-families-toric} shows that $\{L(P_i)\}_i$ is bounded. Fix $i$ and let $m:=M_0(P_i\cap \Z^{n_i})$. Since $P_i$ is convex, there exists an injective integer affine transformation $F$ such that $F([0,1]^m)\subseteq P_i$. By definition, $L(P)\leq L(Q)$ whenever $P\subseteq Q$, and $L([0,1]^m)=m$, so we have
	\begin{align*}
		L(P_i) \geq L(F([0,1]^m)) \geq L([0,1]^m) = m = M_0(P_i\cap \Z^{n_i})
	\end{align*}
	which implies that $\{M_0(P_i\cap \Z^{n_i})\}_i$ is bounded. 
\end{proof}

Next, having given a lower bound for $f_N$ in \cref{prop:main-lower-bound}, we can complete the proof of \cref{thm:main} by giving the following upper bound.

\begin{proposition}\label{prop:main-upper-bound}
	Let $c\in (0,1]$, let $N\geq2$ be an integer, and let $\epsilon>0$. Then there exists a family of sets $S_n\subseteq[N]^n$ such that $\liminf_n\delta(S_n)\ge c$ and, for sufficiently large $n$, $M_0(S_n) \le (1+\epsilon)\log_2(n)$.
\end{proposition}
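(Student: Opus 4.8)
The plan is to produce $S_n$ by a probabilistic construction and to bound $M(S_n)=M(\Conv S_n)$ from above by a first-moment argument. Fix $m=m_n:=\lfloor(1+\epsilon)\log_2 n\rfloor+1$, so that $m>(1+\epsilon)\log_2 n$ and $2^m\ge n^{1+\epsilon}$; it then suffices to arrange that $\Conv(S_n)$ contains no \emph{affine $m$-cube}, i.e.\ no injective affine image of $\{0,1\}^m$, since this forces $M(S_n)<m\le(1+\epsilon)\log_2 n$. The key enumerative input is that $[N]^n$ contains few affine $m$-cubes: such a cube is determined by a base point $w\in[N]^n$ together with edge vectors $u_1,\dots,u_m\in\Z^n$, and requiring all $2^m$ vertices $w+\sum_{i\in I}u_i$ to lie in $[N]^n$ forces $\sum_i|u_{i,j}|\le N-1$ in every coordinate $j$. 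Counting integer vectors of bounded $\ell_1$-norm coordinate by coordinate, the number of admissible tuples $(u_1,\dots,u_m)$ is at most $(C_N m^{N-1})^n$ for a constant $C_N$, so the number of affine $m$-cubes in $[N]^n$ is at most $N^n(C_N m^{N-1})^n=\exp\!\big(O(n\log\log n)\big)$. If $S_n$ is chosen so that each vertex of a fixed cube lies in $S_n$ with probability at most $p<1$, the expected number of affine $m$-cubes contained in $S_n$ is at most $\exp\!\big(O(n\log\log n)\big)\cdot p^{2^m}\le\exp\!\big(O(n\log\log n)-n^{1+\epsilon}|\log p|\big)$, which tends to $0$ since $n^{1+\epsilon}\gg n\log\log n$. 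A Chernoff bound simultaneously gives $\delta(S_n)\ge c$ with overwhelming probability, taking the inclusion probability $p$ to be any constant in $(c,1)$ — or a sequence $p_n\to1$, such as $p_n=1-n^{-\epsilon/2}$, when $c=1$. Hence for all large $n$ a set $S_n$ with $\delta(S_n)\ge c$ and $M(S_n)\le(1+\epsilon)\log_2 n$ exists, and these assemble into the required family.

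For $N=2$ I would simply take $S_n$ to be a uniformly random subset of $\{0,1\}^n$ of the prescribed density: every lattice point of $[0,1]^n$ is an extreme point of the cube, so $\Conv(S_n)\cap\Z^n=S_n$, and the expected-cube bound above literally controls $M(S_n)$, each cube-vertex lying in $S_n$ with probability $p$.

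The hard part is the general case $N\ge3$, where $\Conv(S_n)$ can be vastly larger than $S_n$. For a genuinely random $S_n$ of positive density, any ``deep'' lattice point — one lying strictly inside $[0,N-1]$ in many coordinates — is, with overwhelming probability, a convex combination of pairs of points of $S_n$, so $\Conv(S_n)$ essentially fills the cube and already contains affine cubes of dimension $\Theta(n)$ (e.g.\ with edge vectors $e_{2i}-e_{2i-1}$). So for $N\ge3$ the random choice must be replaced by one whose convex hull is itself thin; concretely, it suffices to produce a polytope $P\subseteq[0,N-1]^n$ with $|P\cap\Z^n|\ge cN^n$ that contains no affine $m$-cube for $m>(1+\epsilon)\log_2 n$, and then set $S_n=P\cap\Z^n$. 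I expect building $P$ to be the crux: it should be arranged so that its only affine cubes are those forced by the pigeonhole mechanism underlying \cref{prop:main-lower-bound} — for instance through a recursive construction that parallels the prefix/fiber recursion of \cref{prop:inductive-f-bound}, or by a local-repair argument (the Lov\'asz local lemma) applied to a random choice — with the cube-counting estimate above playing the role of showing that no such recursion can survive beyond $(1+\epsilon)\log_2 n$ steps.
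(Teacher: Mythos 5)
Your proposal is incomplete as a proof of the stated proposition: for $N\ge3$ you explicitly stop short of constructing $S_n$, deferring the ``thin hull'' construction as the crux, so only the case $N=2$ (together with the general first-moment mechanism) is actually established. That said, the part you did carry out is essentially the paper's argument. The paper also reduces to densities $c_n\to1$ (namely $c_n=1-N^{-\lfloor\log\log n\rfloor}$, which subsumes every $c\in(0,1]$), counts the at most $N^{n(r+1)}$ injective affine maps $\{0,1\}^r\to[N]^n$ with $r\in((1+\epsilon/2)\log_2 n,(1+\epsilon)\log_2 n)$, bounds by $c_n^{2^r}$ the probability that a random density-$c_n$ subset contains a fixed image $Q_i$, and concludes via the Lov\'asz Local Lemma (a union bound suffices, exactly as in your first-moment computation) that some $S$ of density $c_n$ contains none of the $Q_i$; your sharper count $\exp(O(n\log\log n))$ of affine cubes is correct but not needed.

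The point where you got stuck is, however, not addressed by the paper's proof either, and you have identified a genuine subtlety rather than merely failed to finish. The paper passes from ``$S$ contains no $Q_i$'' directly to ``$M(S)<r$''. Under \cref{not:M(S)} one has $M(S)=M(\Conv(S))$, so this inference needs every unimodular $r$-cube inside $\Conv(S)$ to have all of its (lattice) vertices in $S$ itself; that is automatic for $N=2$, since a lattice point of $\Conv(S)\subseteq[0,1]^n$ is an extreme point of $[0,1]^n$, hence of $\Conv(S)$, hence lies in $S$ (your observation), but it fails to be automatic for $N\ge3$, where lattice points of the hull need not belong to $S$. Your midpoint argument that a uniformly random dense subset of $[N]^n$ with $N\ge3$ has $\Conv(S)$ containing unit cubes of dimension $\Theta(n)$ is correct, so the distinction has teeth: forbidding vertex-configurations inside $S$ does not by itself control $M(\Conv(S))$. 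The paper in effect works throughout with the ``vertices in $S$'' reading of $M$ (compare the proof of \cref{l:MP-vs-MSa}, where $M(T_a\cap T_b)=m$ is taken to yield an affine injection $\{0,1\}^m\to T_a\cap T_b$); under that reading your union-bound argument is a complete proof and essentially identical to the paper's, while under the literal hull reading of \cref{not:M(S)} both your write-up and the paper's proof are missing the same bridge for $N\ge3$ --- the bridge your final paragraph correctly names as the crux but does not supply.
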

\begin{proof}
	It suffices to prove this in the case when $c=1$. For $n\ge 3$, we let $c_n = 1-1/N^{\lfloor \log(\log(n))\rfloor}$. Observe that for $n$ sufficiently large we have
	\begin{equation}\log(4) + n((1+\epsilon) \log_2(n)+1) < n^{1+\epsilon/2} \log(1/c_n),
		\label{eq:ep}\end{equation} since 
	$$\log(1/c_n)>1/(2N^{\lfloor \log \log(n)\rfloor})> 1/(2N^{2\log(\log(n))}) > 1/(2\log(n)^2)$$ for $n$ large.
	
	We let $n\ge 3$ be such that the inequality in \cref{eq:ep} holds and
	we pick a positive integer $r$ in the interval $((1+\epsilon/2)\log_2(n),(1+\epsilon)\log_2(n))$.

	Observe that an injective integer affine map from $\{0,1\}^r$ to $[N]^n$ is uniquely determined by the images of $e_1,\ldots, e_r, e_1+\cdots +e_r$, where $e_i\in \{0,1\}^r$ denotes the vector with a $1$ in the $i$-th coordinate and zeros in every other coordinate.  Since there are at most $N^n$ choices for each vectors, we see there are at most $N^{n(r+1)}$ injective integer affine maps from $\{0,1\}^r$ to $[N]^n$.  We let
	$Q_1,\ldots ,Q_L\subseteq [N]^n$ with $L\le N^{n(r+1)}$ denote the distinct images of these injective maps.  
	
	We now consider events $X_1,\ldots ,X_L$, where $X_i$ is the event that a subset of $[N]^n$ of density $c'$ contains the set $Q_i$. The set $Q_i$ has size $2^r$ and so the probability that a subset of $[N]^n$ of density $c_n$ contains $Q_i$ is given by
	$${N^n -2^r \choose N^n c_n - 2^r}{N^n \choose N^n c_n}^{-1} = \prod_{i=0}^{2^r-1} \frac{N^n c_n-i}{N^n-i}.$$
	Since $c_n<1$, we see that $(N^n c_n -i)(N^n-i)^{-1} < c_n$ for $i=0,\ldots ,2^r-1$, and so the event $X_i$ occurs with probability at most $c_n^{2^r}$.  
	By the Lovasz Local Lemma \cite[Lecture 8]{Spencerbook}, there is a nonzero chance that none of the events $X_1,\ldots ,X_L$ occur, provided $4L\cdot c_n^{2^r} < 1$ and since $L\le N^{n(r+1)}$, it is enough to have the inequality $\log(4) + n(r+1) < 2^r \log(1/c_n)$.  Since $(1+\epsilon/2)\log_2(n)<r<(1+\epsilon)\log_2(n)$, we then see it is sufficient to have the inequality
	$$\log(4) + n((1+\epsilon) \log_2(n)+1) < n^{1+\epsilon/2} \log(1/c_n),$$ which holds for $n$ large by \cref{eq:ep}.
	
	It follows that for $n$ sufficiently large there exists a set $S\subseteq [N]^n$ of density $c_n$ that contains none of $Q_1,\ldots ,Q_L$ and hence $M_0(S)<r < (1+\epsilon)\log_2(n)$.  The result follows.
\end{proof}

\begin{proof}[{Proof of \cref{thm:main}}]
	The result follows immediately from the lower bound given in \cref{prop:main-lower-bound} and the upper bound given in \cref{prop:main-upper-bound}.
\end{proof}

Lastly, we prove \cref{thm:c=0}.

\begin{proof}[Proof of \cref{thm:c=0}] 
	Notice that if we have an infinite subset $T$ of $\mathbb{N}$ and we have a family of nonempty sets $\{S_n\}_{n\in T}$ with $S_n\subseteq \{0,\ldots ,N-1\}^n$ such that $\lim_n \log(|S_n|)/n=1$ then if we let $c_n=\delta(S_n)$, we have $c_n^{1/n}\to 1$. Thus it suffices to show that $f(n,c_n)\to \infty$ as $n\to\infty$ whenever $c_n\in (0,1]$ is a sequence of positive numbers with $c_n^{1/n}\to 1$.  
	
	To show this, suppose towards a contradiction that there exists an infinite subset $T$ of $\mathbb{N}$ and a sequence of positive real numbers $\{c_n\}_{n\in T}$ with $c_n^{1/n}\to 1$ such that $\liminf_n f(n,c_n)<\infty$.  Then we may select such a sequence $\{c_n\}$ with $\liminf_n f(n,c_n) = m<\infty$, with $m$ minimal among all such sequences.
	
	Let $\epsilon>0$. Then since $c_n^{1/n}\to 1$, we have $c_n>q^{-\epsilon n}$ for $n$ sufficiently large. In particular, for $n$ sufficiently large we have
	$\log(8c_n^{-2}) < \log(8) + 2\epsilon n<n$, and so by \cref{prop:inductive-f-bound},
	\[
	f(n,c_n) \ge f\big(n-\lceil\log(8c_n^{-2})\rceil, 2c_n^2(c_n+4)^{-2}\big)+1.
	\]
	We now let $b_n = 2c_n^2/(c_n+4)^2$. Then $b_n^{1/n}\to 1$ and since $n-\lceil\log(8c_n^{-2})\rceil\to\infty$ as $n\to\infty$, we see by minimality of $m$ that
	$f\big(n-\lceil\log(8c_n^{-2})\rceil, 2c_n^2(c_n+4)^{-2}\big)\ge m$ for $n$ sufficiently large.  But this now gives that $f(n,c_n)\ge m+1$ for $n$ sufficiently large, a contradiction.  It follows that $f(n,c_n)\to\infty$ whenever $c_n^{1/n}\to 1$, giving the first part of \cref{thm:c=0}.
	
	To prove the assertion about optimality, we again use the Lovasz Local Lemma. We fix $\epsilon>0$ and pick $r$ such that $2^{r-1}/(r+3) > 1/\epsilon$. As in the proof of \cref{prop:main-upper-bound}, there are at most 
	$N^{n(r+1)}$ injective integer affine maps from $\{0,1\}^r$ to $[N]^n$, and each one has image of size $2^r$. We let
	$Q_1,\ldots ,Q_L\subseteq [N]^n$ with $L\le N^{n(r+1)}$ denote the distinct images of these injective maps and let $X_i$ be event that a subset of $[N]^n$ of density $c_n:=N^{-\lfloor \epsilon n\rfloor}$ contains the set $Q_i$ for $i=1,\ldots ,L$. 
	
	As in the proof of \cref{prop:main-upper-bound}, the event $X_i$ occurs with probability at most $c_n^{2^r}$. To show that there is a nonzero probability that no $X_i$ occurs, by the Lovasz Local Lemma it suffices to show that 
	$4 L c_n^{2^r}<1$.  Using the fact that $L\le N^{n(r+1)}$, we see for $n$ sufficiently large we have
	\begin{align*} 4Lc_n^{2^r} &\le 4\cdot N^{n(r+1)} N^{-2^r \lfloor \epsilon n\rfloor}\\
		&\le 4\cdot N^{n(r+1)} \cdot N^{-2^r (\epsilon n -1)} \\
		&\le N^2 \cdot N^{n(r+1) - 2^r \epsilon n/2}\\
		&\le N^{n(r+3) - 2^{r-1}\epsilon n},
	\end{align*}
	which is strictly smaller than $1$ by our choice of $r$.  It follows that for $n$ large, there is a subset $S_n$ of $\{0,\ldots ,N-1\}^n$ with density at least $N^{-\lfloor \epsilon n\rfloor}$ such that $M_0(S_n)\le r$.  In particular, $|S_n| \ge N^{(1-\epsilon)n}$, completing the proof.
\end{proof}

\printbibliography

\end{document}